\newtheorem{theorem}{Theorem}[section]
\newtheorem{lemma}[theorem]{Lemma}
\theoremstyle{definition}
\newtheorem{definition}[theorem]{Definition}
\numberwithin{equation}{section}
\DeclareMathOperator{\soc}{soc}
\DeclareMathOperator{\rank}{rank}
\DeclareMathOperator{\tr}{tr}
\DeclareMathOperator{\dett}{det}
\begin{document}
	
	\title[]{Rank in Banach Algebras: A Generalized Cayley-Hamilton Theorem}
	\author{G. Braatvedt, R. Brits and F. Schulz}
	\address{Department of Mathematics, University of Johannesburg, South Africa}
	\email{gabraatvedt@uj.ac.za, rbrits@uj.ac.za,  francoiss@uj.ac.za}
	\subjclass[2010]{46H05, 46H10}
	\keywords{rank, determinant, Cayley-Hamilton Theorem}

\begin{abstract}
	Let $A$ be a semisimple Banach algebra with non-trivial, and possibly infinite-dimensional socle. Addressing a problem raised in \cite[p.1399]{harher05}, we first define a characteristic polynomial for elements belonging to the socle, and we then show that a Generalized Cayley-Hamilton Theorem holds for the associated polynomial. The key arguments leading to the main result follow from the observation that a purely spectral approach to the theory of the socle carries alongside it an efficient method of dealing with relativistic problems associated with infinite-dimensional socles.
\end{abstract}
	\parindent 0mm
	
	\maketitle

\section{The Characteristic Polynomial}

Let $A$ be a complex, semisimple Banach algebra with identity element $\mathbf 1$ and invertible group $A^{-1}$. For $x\in A$ denote
 $\sigma_A(x):=\{\lambda\in\mathbb C:\lambda\mathbf1-x\notin A^{-1}\}$, and $\sigma_A^\prime(x):=\sigma_A(x)\backslash\{0\}$. If the underlying algebra is clear from the context, then we shall agree to omit the subscript $A$ in the notation $\sigma_A(x)$ and $\sigma_A^\prime(x)$. This convention will also be followed in the forthcoming definitions of rank, trace, determinant, etc.  As in \cite{harher05}, following Aupetit and Mouton in \cite{aupmou96},  we define the \emph{rank} of $a\in A$ by
\begin{equation}\label{rank}
\rank_A(a)=\sup_{x\in A}\# \sigma^\prime(xa)\leq\infty.
\end{equation}
where the symbol $\#K$ denotes the number of distinct elements in a set $K\subseteq \mathbb C$. It can be shown \cite[Corollary 2.9]{aupmou96} that the socle, written $\soc(A)$, of a semisimple Banach algebra $A$ coincides with the collection $\mathcal F:=\{a\in A:\rank(a)<\infty\}$ of finite-rank elements.  With respect to ~\eqref{rank} it is further useful to know that $\sigma^\prime(xa)=\sigma^\prime(ax)$ (Jacobson's Lemma).
If $x\in A$ is such that $\#\sigma^\prime(xa)=\rank(a)$, then we say \emph{$a$ assumes its rank at $x$}. An important fact
in this regard is that, for each $a\in\soc(A)$, the set
\begin{equation}\label{assume}
E_A(a)=\{x\in A:\#\sigma^\prime(xa)=\rank(a)\}
\end{equation}
is dense and open in $A$ \cite[Theorem 2.2]{aupmou96}. If $a\in\soc(A)$ assumes its rank at $\mathbf 1$ then $a$ is said to be a \emph{maximal finite-rank element}. Maximal finite-rank elements are important because they can be ``diagonalized" \cite[Theorem 2.8]{aupmou96}. That is, if $a\in\soc(A)$ satisfies $\rank(a)=\#\sigma^\prime(a)=n$, then $a$ can be expressed as
$$a=\lambda_1p_1+\cdots+\lambda_np_n,$$ where: the $\lambda_i$ are the distinct nonzero spectral values of $a$; and the $p_i$ the corresponding Riesz projections, all of which are minimal (and hence rank one). Furthermore, the collection of maximal finite-rank elements is dense in $\soc(A)$.

For $a\in\soc(A)$, Aupetit and Mouton now use the ``spectral rank" in ~\eqref{rank} to define the \emph{trace} and \emph{determinant} as:
\begin{equation}\label{trace}
\tr_A(a)=\sum\limits_{\lambda\in\sigma(a)}\lambda\, m(\lambda,a)
\end{equation}
\begin{equation}\label{determinant}
\dett_A(a+\mathbf 1)=\prod\limits_{\lambda\in\sigma(a)}(\lambda+1)^{m(\lambda,a)}
\end{equation}
where $m(\lambda,a)$ is the \emph{multiplicity of $a$ at $\lambda$}. A brief description of the notion of multiplicity in the abstract case goes as follows (for particular details one should consult \cite{aupmou96}): Let $a\in\soc(A)$, $\lambda\in\sigma(a)$ and let $V_\lambda$ be an open disk centered at $\lambda$ such that $V_\lambda$ contains no other points of $\sigma(a)$. In \cite[p.119--120]{aupmou96} it is shown  that there exists an open ball, say $U\subset A$, centered at $\mathbf 1$ such that $\#\left[\sigma(xa)\cap V_\lambda\right]$ is constant as $x$ runs through $E(a)\cap U$. This constant integer is the multiplicity of $a$ at $\lambda$. If $\lambda\not=0$ then one can moreover prove that $m(\lambda,a)$ is the rank of the Riesz projection associated to the pair $(\lambda, a)$. If $a$ is a maximal finite-rank element then $m(\lambda,a)=1$ \cite[p.120]{aupmou96}.

In the operator case, $A=\mathcal L(X)$, where $X$ is a Banach space, the formulas in ~\eqref{rank},  ~\eqref{trace}, and ~\eqref{determinant} can be shown to coincide with the respective classical operator definitions. The Aupetit-Mouton approach is not merely an alternative to the long established theory of rank, trace and determinant for $A=\mathcal L(X)$. It extends the classical theory because it simultaneously takes care of the matter in subalgebras of $\mathcal L(X)$ as well; the notions of rank, trace, and determinant are clearly relative concepts.

  To generalize the Cayley-Hamilton Theorem for matrices \cite[Theorem 3.3.2]{aup91} to the socle of an arbitrary Banach algebra we need a suitable candidate for the characteristic polynomial associated with an element $a\in\soc(A)$.   

\begin{definition}\textnormal{(Generalized Characteristic Polynomial)}\label{d.1}
Let $a \in \soc(A)$. The \emph{generalized characteristic polynomial} of $a$ is defined to be the complex polynomial
\begin{equation}\label{jr}
p_a(\lambda)=\prod_{\alpha \in \sigma_A(a)} \left(\alpha-\lambda\right)^{m\left(\alpha, a\right)}.
\end{equation}
where $m(\alpha,a)$ is the \emph{spectral} multiplicity of $a$ at $\alpha$ (described in the preceding paragraph).
\end{definition}
If $a\in\soc(A)$, $p_a(\lambda)$ as in (\ref{jr}),  and if $x$ belongs to a Banach algebra $B$ with identity $e$ then we also define $$p_{a,e}(x)=\prod_{\alpha \in \sigma_A(a)} \left(\alpha e-x\right)^{m\left(\alpha, a\right)}$$
with the understanding that if $B=A$ and $e=\mathbf 1$ we simply write $p_a(x)$. 
Definition~\ref{d.1} calls for some comments: To start with, the product defined in Definition~\ref{d.1} has a finite number of factors since $a\in\soc(A)$ (which implies that the spectrum of $a$, and the associated multiplicities are finite). Thus the polynomial $p_a(\lambda)$ exists.  Moreover, for any fixed $\lambda_0 \in \mathbb{C}$, using a similar argument as in the proof of
 \cite[Theorem 3.3]{aupmou96}, it can be shown that $a \mapsto p_a(\lambda_0)$ is continuous on $\mathcal{F}_{k}:=\{a\in\soc(A):\rank(a)\leq k\}$ for every nonnegative integer $k$.
It is important to realize that, in Definition~\ref{d.1},  Aupetit and Mouton's notion of multiplicity is independent of the particular structure and dimension of the socle, and that it should therefore not be compared to the classical algebraic or geometric multiplicities of eigenvalues in the case where $A=M_n(\mathbb C)$.
Specifically, if $a \in M_{n} \left(\mathbb{C}\right)$, then it is not necessarily the case that $p_a(\lambda)$ is equal to the characteristic polynomial as defined in the classical sense. This is immediately obvious if one considers $a=0$. To give a non-trivial example, if $a\in M_3(\mathbb C)$ is
$$a=\left(\begin{array}{ccc} 1 & 0 & 0 \\ 0 & 0 & 0 \\ 0 & 0 & 0 \end{array}\right),$$
then $p_a(\lambda) = -\lambda\left(1-\lambda\right)$, whereas the classical characteristic polynomial of $a$ is given by $\dett(a-\lambda\mathbf 1) =(-\lambda)^{2}\left(1-\lambda\right)$. The explanation for this follows from observing that the Aupetit-Mouton definition of multiplicity (\cite[p.120]{aupmou96}) of $0\in\sigma(a)$ does not necessarily coincide with the algebraic multiplicity associated with the $0$ spectral value of a matrix (in the case of singular matrices). However, if $a \in M_{n} \left(\mathbb{C}\right)$ is an invertible maximal finite-rank element, then $0 \notin \sigma (a)$ and $\#\sigma^\prime(a)= n$, so the algebraic multiplicity of each spectral value of $a$ is $1$. Hence, in this case, it follows that $p_a(\lambda)$ does in fact coincide with the classical characteristic polynomial of $a$. Despite the aforementioned discrepancy one observes that, for $a\in\soc(A)$, the characteristic polynomial in Definition~\ref{d.1} encodes all information pertaining to the spectral values of $a$, as well as their multiplicities, but in the context of the generalized definitions given in \cite{aupmou96}. So it is reasonable to conjecture that $p_a(a)=0$.

\section{The Cayley-Hamilton Theorem}

To avoid any chance of confusion, relative identity elements belonging to the same algebra will be clearly indicated. Further, the notation $\dett_C(\cdot)$ which appears in this section refers exclusively to the classical determinant where $C=M_n(\mathbb C)$ for some $n$; so the determinant in (\ref{determinant}) will not be used. In order to prove the main result, Theorem~\ref{f.5}, we need a little preparation: Lemma~\ref{f.0} is well-known, and the first part appears in \cite[Chapter 3, Exercise 6]{aup91}; we have been unable to find a suitable reference for \eqref{eqf.2}, but the proof is not hard:
\begin{lemma}\label{f.0}Let $p$ be a projection of a complex, semisimple, and unital Banach algebra $A$. Then $pAp$ is a closed semisimple subalgebra of $A$ with identity $p$ and
\begin{equation}\label{eqf.2}
\sigma^\prime_{pAp} \left(pxp\right) = \sigma^\prime_{A} \left(pxp\right)
\end{equation}
for each $x \in A$.
\end{lemma}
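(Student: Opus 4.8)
The plan is to treat the first sentence of the lemma as standard and to concentrate the proof on the spectral identity \eqref{eqf.2}. For the first sentence one notes that $pAp=\{a\in A:pa=a=ap\}$ (if $a=pxp$ then $pa=a=ap$, and conversely $pa=a=ap$ gives $a=p(ap)=pap$), so $pAp$ is an intersection of kernels of bounded operators and hence closed; it is visibly a subalgebra with identity $p$; and its semisimplicity is the cited exercise, which rests on the ring-theoretic identity $\rad(pAp)=p\,\rad(A)\,p=\{0\}$.

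For \eqref{eqf.2} the idea is a ``two orthogonal corners'' splitting. Fix $x\in A$, set $y:=pxp$ and $q:=\mathbf 1-p$, and observe that $py=yp=y$ (hence $qy=yq=0$) and $p(\lambda\mathbf 1-y)=(\lambda\mathbf 1-y)p=\lambda p-y$ for every $\lambda\in\mathbb C$. Writing
\[
\lambda\mathbf 1-y=\lambda q+(\lambda p-y),
\]
the first summand lies in $qAq$ and the second, being $p(\lambda\mathbf 1-x)p$, lies in $pAp$; since $pq=qp=0$ these two corners annihilate one another on both sides, so the decomposition is ``block diagonal'' and $\lambda\mathbf 1-y$ is invertible in $A$ exactly when both blocks are invertible in their respective corners, the inverse splitting accordingly. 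For $\lambda\neq0$ the block $\lambda q$ is automatically invertible in $qAq$ (inverse $\lambda^{-1}q$), so for such $\lambda$ one has $\lambda\mathbf 1-y\in A^{-1}$ if and only if $\lambda p-y\in(pAp)^{-1}$, which is precisely $\sigma^\prime_{A}(y)=\sigma^\prime_{pAp}(y)$.

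In practice I would verify the two implications by exhibiting the inverses directly, using only the identities above: if $b\in pAp$ inverts $\lambda p-y$ in $pAp$, then a one-line computation shows $\lambda^{-1}q+b$ inverts $\lambda\mathbf 1-y$ in $A$; conversely, if $w:=(\lambda\mathbf 1-y)^{-1}$ in $A$, then $w$ commutes with $p$ (being the inverse of something that does), so $pwp=pw=wp\in pAp$ and $pwp$ inverts $\lambda p-y$ there. Since $A$ need not be commutative one should check both the left- and the right-sided inverse identities, but each is immediate.

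I do not expect a genuine obstacle. The only point requiring care is that $pAp$ does not contain $\mathbf 1$, so the primes cannot be dropped: the value $0$ can easily belong to one of the two spectra but not the other --- for example, when $p\neq\mathbf 1$ one always has $0\in\sigma_A(y)$ because $yq=0$ with $q\neq0$, whereas $y$ may perfectly well be invertible in the corner $pAp$.
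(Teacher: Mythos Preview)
Your proposal is correct and follows essentially the same route as the paper: the paper exhibits precisely the inverse $\frac{1}{\lambda}(p-\mathbf 1)+pyp$ (your $\lambda^{-1}q+b$, up to a sign convention) to pass from invertibility in $pAp$ to invertibility in $A$, and compresses by $p$ on both sides for the converse. Your block-diagonal framing and the explicit observation that the $A$-inverse commutes with $p$ are helpful elaborations, but the underlying computation is identical.
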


\begin{proof}
If $$(pxp-\lambda p)pyp=pyp(pxp-\lambda p)=p,$$ then
$$(pxp-\lambda \mathbf 1)\left(\frac{1}{\lambda}(p-\mathbf 1)+pyp\right)=\mathbf 1=\left(\frac{1}{\lambda}(p-\mathbf 1)+pyp\right)(pxp-\lambda \mathbf 1).$$
Conversely, if $$(pxp-\lambda \mathbf 1)y=y(pxp-\lambda \mathbf 1)=\mathbf 1,$$ then
$$(pxp-\lambda p)pyp=pyp(pxp-\lambda p)=p.$$
\end{proof}

\begin{lemma}\label{f.1}
Let $p$ be a finite-rank projection of $A$. Then
$$\mathrm{rank}_{pAp} \,(pxp) = \mathrm{rank}_{A}\,(pxp)$$
for each $x \in A$.
\end{lemma}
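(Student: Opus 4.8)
The plan is to show the two inequalities separately, using the spectral definition of rank from \eqref{rank} together with the identity \eqref{eqf.2} from Lemma~\ref{f.0}. First I would record the basic inclusion: since $pAp \subseteq A$, the supremum in \eqref{rank} defining $\rank_{pAp}(pxp)$ is taken over a smaller set (all elements of the form $pyp$ with $y \in A$) than the supremum defining $\rank_A(pxp)$. So one direction, $\rank_{pAp}(pxp) \le \rank_A(pxp)$, would need a little care — it is actually the less obvious one in this setup — while the reverse, $\rank_A(pxp) \le \rank_{pAp}(pxp)$, requires comparing $\sigma'_A(z(pxp))$ for arbitrary $z \in A$ with the spectra attainable inside the subalgebra.

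The key observation I would use is that for any $z \in A$, the element $z(pxp)$ satisfies $\sigma'_A(z(pxp)) = \sigma'_A((pxp)z) = \sigma'_A((pxp)z(pxp)\cdot\text{something})$ — more precisely, by Jacobson's Lemma $\sigma'_A(z \cdot pxp) = \sigma'_A(pxp \cdot z) = \sigma'_A(p(xp z)p \cdot p) $, and since $pxp = p(pxp)p$ we can write $z(pxp) = z p (pxp)$, so $\sigma'_A(z(pxp)) = \sigma'_A((pxp) z p) = \sigma'_A(p(pxp)(zp) p)$, which is the primed spectrum of an element of the form $p w p$ with $w = (pxp)z \in A$. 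Now apply \eqref{eqf.2}: $\sigma'_A(pwp) = \sigma'_{pAp}(pwp)$. Thus every primed spectrum $\sigma'_A(z \cdot pxp)$ arising in the computation of $\rank_A(pxp)$ coincides with $\sigma'_{pAp}(pwp)$ for a suitable $w$, and since $pwp = (p \cdot pxp)(zp) = \ldots$ can be arranged as $q \cdot (pxp)$ for $q = pwp \in pAp$ — here I would just note $pwp = p(pxp)zp$, and inside $pAp$ this equals $(p(pxp)zp)\,$, which we want to view as (element of $pAp$) times $pxp$; since $pxp$ is the same element, $p(pxp)(zp) \cdot$ — writing $q := p(pxp)(zp)p \in pAp$ we do \emph{not} immediately get $q \cdot pxp$, so the cleanest route is: $\sigma'_A(z \cdot pxp) = \sigma'_A((pxp) \cdot zp)$, and $(pxp)(zp) = (pxp)(p z p) = (pxp)(pzp)$ is a product of two elements of $pAp$, hence by Jacobson in $pAp$, $\sigma'_{pAp}((pxp)(pzp)) = \sigma'_{pAp}((pzp)(pxp))$, an attainable set in the definition of $\rank_{pAp}(pxp)$. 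Combining with \eqref{eqf.2} gives $\#\sigma'_A(z \cdot pxp) \le \rank_{pAp}(pxp)$ for all $z$, hence $\rank_A(pxp) \le \rank_{pAp}(pxp)$.

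For the reverse inequality $\rank_{pAp}(pxp) \le \rank_A(pxp)$: given any $pyp \in pAp$, we have $\#\sigma'_{pAp}((pyp)(pxp)) = \#\sigma'_A((pyp)(pxp))$ by \eqref{eqf.2} (note $(pyp)(pxp) = p(yp \cdot px)p$ is of the form $pwp$), and $(pyp)(pxp)$ is in particular of the form $z \cdot (pxp)$ with $z = pyp \in A$, so this cardinality is $\le \rank_A(pxp)$. Taking the supremum over $pyp$ yields the claim. The main obstacle, and the point demanding the most care, is the bookkeeping in the first inequality: one must repeatedly move projections past each other via Jacobson's Lemma to massage an arbitrary $z \cdot pxp$ into a product of two elements lying in $pAp$ so that \eqref{eqf.2} becomes applicable; once that reduction is set up correctly the rest is immediate. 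I expect the final write-up to consist essentially of a short chain of equalities of primed spectra, invoking Jacobson's Lemma twice and \eqref{eqf.2} once, followed by taking suprema.
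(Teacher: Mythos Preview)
Your proposal is correct and uses exactly the same ingredients as the paper's proof: Jacobson's Lemma (to pass from $y(pxp)$ to $(pyp)(pxp)$) together with \eqref{eqf.2}. The paper simply compresses your two inequalities into a single four-line chain of equalities
\[
\rank_{pAp}(pxp)=\sup_{y}\#\sigma'_{pAp}((pyp)(pxp))=\sup_{y}\#\sigma'_{A}((pyp)(pxp))=\sup_{y}\#\sigma'_{A}(y(pxp))=\rank_{A}(pxp),
\]
which is precisely the ``short chain'' you anticipate at the end of your plan.
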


\begin{proof}
Let $x \in A$ be arbitrary. It readily follows from Lemma~\ref{f.0}, and Jacobson's Lemma, that
\begin{eqnarray*}
\mathrm{rank}_{pAp}\,(pxp) &=& \sup_{y\in A} \#\sigma^\prime_{pAp} \left((pyp)(pxp)\right) \\
&=& \sup_{y\in A} \#\sigma^\prime_{A} \left((pyp)(pxp)\right) \\
&=& \sup_{y\in A} \#\sigma^\prime_{A} \left(y(pxp)\right) \\
&=& \mathrm{rank}_{A}\,(pxp),
\end{eqnarray*}
as desired.
\end{proof}

\begin{lemma}\label{f.2}
Let $A_{j} = M_{n_{j}} \left(\mathbb{C}\right)$ for each $j \in \left\{1, \ldots, k \right\}$ and let $A = A_{1} \oplus \cdots \oplus A_{k}$. Suppose that $a = \left(a_{1}, \ldots, a_{k}\right)$ is a maximal finite-rank element of $A$. Then
$\sigma^\prime_{A_{i}} \left(a_{i}\right)\cap \sigma^\prime_{A_{j}} \left(a_{j}\right)=\emptyset$ for $i \neq j$.
\end{lemma}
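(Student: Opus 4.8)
The plan is to unwind what it means for $a$ to be a maximal finite-rank element --- namely that $\#\sigma^\prime_A(a)=\rank_A(a)=\sup_{x\in A}\#\sigma^\prime_A(xa)$ --- and to play this off against the way the punctured spectrum decomposes over the direct sum. The basic observation is that for every $x=(x_1,\dots,x_k)\in A$ one has $\sigma^\prime_A(xa)=\bigcup_{j=1}^{k}\sigma^\prime_{A_j}(x_ja_j)$; in particular, taking $x=\mathbf 1$,
$$\rank_A(a)=\#\sigma^\prime_A(a)=\#\bigcup_{j=1}^{k}\sigma^\prime_{A_j}(a_j)\leq\sum_{j=1}^{k}\#\sigma^\prime_{A_j}(a_j),$$
and this inequality is an equality precisely when the sets $\sigma^\prime_{A_1}(a_1),\dots,\sigma^\prime_{A_k}(a_k)$ are pairwise disjoint. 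So it suffices to establish the matching lower bound $\rank_A(a)\geq\sum_{j=1}^{k}\#\sigma^\prime_{A_j}(a_j)$.

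To produce that lower bound I would feed a scalar dilation of the identity into the rank formula. First choose nonzero complex numbers $c_1,\dots,c_k$ for which the dilated sets $c_1\sigma^\prime_{A_1}(a_1),\dots,c_k\sigma^\prime_{A_k}(a_k)$ are pairwise disjoint; this is possible since, once $c_1,\dots,c_{j-1}$ have been fixed, only finitely many choices of $c_j$ can create a coincidence and $\mathbb C$ is infinite. Because $c_j\neq 0$, the scaling property of the spectrum gives $\sigma^\prime_{A_j}(c_ja_j)=c_j\,\sigma^\prime_{A_j}(a_j)$, so the dilation preserves the number of distinct nonzero spectral values. Taking $x=(c_1\mathbf{1}_1,\dots,c_k\mathbf{1}_k)\in A$, where $\mathbf{1}_j$ is the identity of $A_j$, we get
$$\rank_A(a)\geq\#\sigma^\prime_A(xa)=\#\bigcup_{j=1}^{k}c_j\sigma^\prime_{A_j}(a_j)=\sum_{j=1}^{k}\#\sigma^\prime_{A_j}(a_j).$$
Comparing with the upper bound forces equality throughout, whence $\#\bigcup_{j}\sigma^\prime_{A_j}(a_j)=\sum_{j}\#\sigma^\prime_{A_j}(a_j)$, and therefore $\sigma^\prime_{A_i}(a_i)\cap\sigma^\prime_{A_j}(a_j)=\emptyset$ for $i\neq j$.

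I do not anticipate a genuine obstacle; once the scalar-dilation test element is in hand the argument is essentially bookkeeping, and maximality of $a$ is exactly what rules out any overlap. The points that deserve a word of justification are: (i) that the spectrum of an element of a finite direct sum of unital algebras is the union of the component spectra, hence the same for the punctured spectra; (ii) that multiplication by a nonzero scalar neither creates nor destroys nonzero spectral values, so cardinalities are preserved under the dilation; and (iii) the counting fact that a finite union of finite sets has cardinality equal to the sum of the individual cardinalities exactly when the sets are pairwise disjoint. If one prefers, the same computation can be read as the identity $\rank_A(a)=\sum_{j}\rank_{A_j}(a_j)$ for maximal $a$, but the dilation argument delivers the disjointness directly, without first isolating additivity of the rank.
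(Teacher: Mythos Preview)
Your argument is correct. It rests on the same mechanism as the paper's proof---multiplying coordinates by suitable nonzero scalars so as to separate the component spectra and then invoking maximality---but your packaging is different and, arguably, tidier. The paper argues by contradiction: assuming an overlap between $\sigma'_{A_i}(a_i)$ and $\sigma'_{A_j}(a_j)$, it replaces $a_j$ by $(1-\alpha_M)a_j$ for a small $\alpha_M>0$, chosen so that each perturbed spectral value remains in a small ball about the original one; this splits the overlapping value without creating new coincidences, hence strictly increases $\#\sigma'_A(\cdot)$ and contradicts maximality. You instead give a direct counting argument, choosing global dilations $c_1,\dots,c_k$ to force pairwise disjointness of the $c_j\sigma'_{A_j}(a_j)$ and reading off $\rank_A(a)\geq\sum_j\#\sigma'_{A_j}(a_j)$, which combined with the trivial upper bound yields the disjointness at once. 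Your version avoids the $q_0$--neighborhood bookkeeping and gives the additivity $\rank_A(a)=\sum_j\#\sigma'_{A_j}(a_j)$ as a byproduct; the paper's version, on the other hand, isolates a perturbation template that it reuses verbatim in the next lemma.
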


\begin{proof}
Assume, to the contrary, that
$$\sigma^\prime_{A_{i}} \left(a_{i}\right)\cap \sigma^\prime_{A_{j}} \left(a_{j}\right)\neq\emptyset$$
for some $i, j \in \left\{1, \ldots, k \right\}$ with $i \neq j$. Let
$$q_{0} = \min\left\{\left|\alpha - \beta\right| : \alpha, \beta \in \sigma_{A} (a) \cup \left\{0 \right\}, \alpha \neq \beta  \right\}$$
and let $q = q_{0}/2$. Let $\left(\alpha_{n}\right) \subseteq \left(0, 1\right)$ be any sequence such that $\alpha_{n} \rightarrow 0$ as $n \rightarrow \infty$. Then $\left(1 - \alpha_{M}\right) \beta \in B\left(\beta, q\right)$ for each $\beta \in \sigma_{A} (a) \cup \left\{0 \right\}$ if $M$ is taken sufficiently large. However, since
$$\sigma_{A} \left(\left(w_{1}, \ldots, w_{k}\right)\right) = \bigcup_{j=1}^{k} \sigma_{A_{j}} \left(w_{j}\right)$$
for each $\left(w_{1}, \ldots, w_{k}\right) \in A$, it readily follows that
\begin{eqnarray*}
&& \#\sigma^\prime_{A} \left(\left(\mathbf{1}, \ldots, \mathbf{1}, \left(1-\alpha_{M}\right)\mathbf{1}, \mathbf{1}, \ldots, \mathbf{1}\right)a\right)\\
& = & \#\sigma^\prime_{A} \left(\left(a_{1}, \ldots, a_{j-1}, \left(1-\alpha_{M}\right)a_{j}, a_{j+1}, \ldots, a_{k}\right)\right)\\
& > & \#\sigma^\prime_{A} (a).
\end{eqnarray*}
But then we obtain a contradiction with the fact that $a$ is a maximal finite-rank element of $A$. So the lemma is true.
\end{proof}

\begin{lemma}\label{f.3}
Let $A_{j} = M_{n_{j}} \left(\mathbb{C}\right)$ for each $j \in \left\{1, \ldots, k \right\}$ and let $A = A_{1} \oplus \cdots \oplus A_{k}$. Suppose that $a = \left(a_{1}, \ldots, a_{k}\right)$ is a maximal finite-rank element of $A$. Then $a_{j}$ is a maximal finite-rank element of $A_{j}$ for each $j \in \left\{1, \ldots, k \right\}$.
\end{lemma}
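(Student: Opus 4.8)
The plan is to compare, component by component, the cardinality of the nonzero spectrum with the rank, using that in a finite direct sum of matrix algebras both quantities behave additively. Write $\mathbf 1_j$ for the identity of $A_j$ and recall the identity $\sigma_A\bigl((w_1,\dots,w_k)\bigr)=\bigcup_{j=1}^k\sigma_{A_j}(w_j)$ used in the proof of Lemma~\ref{f.2}, which immediately gives $\sigma_A^\prime(w)=\bigcup_{j=1}^k\sigma_{A_j}^\prime(w_j)$ for every $w\in A$. Applying this to $w=a$ and invoking the disjointness $\sigma_{A_i}^\prime(a_i)\cap\sigma_{A_j}^\prime(a_j)=\emptyset$ ($i\neq j$) from Lemma~\ref{f.2}, I get
\[
\#\sigma_A^\prime(a)=\sum_{j=1}^k\#\sigma_{A_j}^\prime(a_j).
\]

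Next I would bound the rank from both sides. On the one hand, taking $x=\mathbf 1_j$ in the supremum defining $\rank_{A_j}$ gives the trivial inequality $\#\sigma_{A_j}^\prime(a_j)\le\rank_{A_j}(a_j)$ for each $j$. On the other hand, I claim $\rank_A(a)\ge\sum_{j=1}^k\rank_{A_j}(a_j)$: for each $j$ choose $y_j\in A_j$ with $\#\sigma_{A_j}^\prime(y_ja_j)=\rank_{A_j}(a_j)$ (legitimate, since $A_j$ is finite-dimensional, so this supremum of integers is attained), then use $\sigma_{A_j}^\prime(c\,y_ja_j)=c\,\sigma_{A_j}^\prime(y_ja_j)$ for $c\neq 0$ to pick nonzero scalars $c_1,\dots,c_k$ making the finitely many sets $c_j\sigma_{A_j}^\prime(y_ja_j)$ pairwise disjoint; with $x=(c_1y_1,\dots,c_ky_k)$ we then have $\#\sigma_A^\prime(xa)=\sum_{j=1}^k\rank_{A_j}(a_j)$, whence the claim.

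Finally I would string these together. Because $a$ is a maximal finite-rank element, $\#\sigma_A^\prime(a)=\rank_A(a)$, so
\[
\sum_{j=1}^k\rank_{A_j}(a_j)\le\rank_A(a)=\#\sigma_A^\prime(a)=\sum_{j=1}^k\#\sigma_{A_j}^\prime(a_j)\le\sum_{j=1}^k\rank_{A_j}(a_j);
\]
thus every inequality is an equality, and since each term obeys $\#\sigma_{A_j}^\prime(a_j)\le\rank_{A_j}(a_j)$, equality of the two sums of nonnegative integers forces $\#\sigma_{A_j}^\prime(a_j)=\rank_{A_j}(a_j)$ for every $j$ — i.e.\ each $a_j$ assumes its rank at $\mathbf 1_j$ and is therefore a maximal finite-rank element of $A_j$. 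The only point demanding a moment's thought is the disjointification of the scaled spectra in the middle step, and that is painless precisely because each $A_j$ is a matrix algebra (finite spectra, which can be scaled apart); everything else is bookkeeping with $\sigma_A=\bigcup_j\sigma_{A_j}$ and Lemma~\ref{f.2}.
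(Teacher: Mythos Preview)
Your proof is correct. The paper argues by contradiction on a single coordinate: assuming $\#\sigma^\prime_{A_j}(a_j)<\rank_{A_j}(a_j)$ for some $j$, it picks $y\in E(a_j)$, scales so that $\sigma^\prime_{A_j}(\alpha ya_j)$ is disjoint from every $\sigma^\prime_{A_i}(a_i)$ ($i\neq j$), and then the element $x=(\mathbf 1_1,\dots,\alpha y,\dots,\mathbf 1_k)$ yields $\#\sigma^\prime_A(xa)>\#\sigma^\prime_A(a)$, contradicting maximality. Your direct version instead proves the additive inequality $\rank_A(a)\ge\sum_j\rank_{A_j}(a_j)$ by scaling \emph{all} coordinates simultaneously and then squeezes it against $\#\sigma^\prime_A(a)=\sum_j\#\sigma^\prime_{A_j}(a_j)$ from Lemma~\ref{f.2}. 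The two arguments share the same engine---finite spectra in each matrix block can be scaled apart---so this is essentially the same proof reorganized; your additive bound is a tidy standalone fact (indeed $\rank_A(a)=\sum_j\rank_{A_j}(a_j)$ in this setting), while the paper's single-coordinate perturbation is marginally shorter.
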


\begin{proof}
Assume, to the contrary, that $\#\sigma^\prime_{A_{j}} \left(a_{j}\right) < \mathrm{rank}_{A_{j}}\left(a_{j}\right)$ for some $j \in \left\{1, \ldots, k \right\}$. Let $y \in E\left(a_{j}\right)$. If $\sigma^\prime_{A_{j}}\left(ya_{j}\right)\cap\sigma^\prime_{A_{i}} \left(a_{i}\right)\neq\emptyset$ for some $i \neq j$, then we may apply the argument in the proof above to obtain a real number $\alpha > 0$ such that $\sigma^\prime_{A_{j}}\left(\alpha ya_{j}\right)\cap\sigma^\prime_{A_{i}} \left(a_{i}\right)=\emptyset$ for all $i \neq j$. But then
$$\#\sigma^\prime_{A} \left(\left(a_{1}, \ldots, a_{j-1}, \alpha ya_{j}, a_{j+1}, \ldots, a_{k}\right)\right) > \#\sigma^\prime_{A}(a),$$
so we obtain a contradiction with the maximality assumption on $a$. The result now follows.
\end{proof}

\begin{lemma}\label{f.4}
Let $A_{j} = M_{n_{j}} \left(\mathbb{C}\right)$ for each $j \in \left\{1, \ldots, k \right\}$ and let $A = A_{1} \oplus \cdots \oplus A_{k}$. Suppose that $a = \left(a_{1}, \ldots, a_{k}\right)$ is an invertible maximal finite-rank element of $A$. Then
\begin{equation}
p_a(\lambda) = \prod_{j=1}^{k}\dett_{A_{j}}\left(a_{j}- \lambda \mathbf{1_j}\right)
\label{eqf.1}
\end{equation}
for all $\lambda \in \mathbb{C}$.
\end{lemma}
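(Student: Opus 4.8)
The plan is to derive~\eqref{eqf.1} by collapsing the generalized characteristic polynomial of $a$ to a product over the spectrum and then matching it, block by block, with the classical determinants, using Lemmas~\ref{f.2} and~\ref{f.3} together with the remarks on invertible maximal finite-rank matrices made just after Definition~\ref{d.1}. First I would exploit invertibility: since $a=(a_1,\dots,a_k)$ and the invertible group of a finite direct sum is the product of the invertible groups, each $a_j$ is invertible in $A_j$; hence $0\notin\sigma_{A_j}(a_j)$, so $\sigma_{A_j}(a_j)=\sigma^\prime_{A_j}(a_j)$, and in particular $0\notin\sigma_A(a)=\bigcup_{j=1}^{k}\sigma_{A_j}(a_j)$.

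Because $a$ is a maximal finite-rank element of $A$, the Aupetit--Mouton multiplicity satisfies $m(\alpha,a)=1$ for every $\alpha\in\sigma_A(a)$ (the property of maximal finite-rank elements recalled in Section~1). Thus the product in~\eqref{jr} reduces to
\[
p_a(\lambda)=\prod_{\alpha\in\sigma_A(a)}(\alpha-\lambda).
\]
Now Lemma~\ref{f.2} gives $\sigma^\prime_{A_i}(a_i)\cap\sigma^\prime_{A_j}(a_j)=\emptyset$ for $i\neq j$, and since each $\sigma_{A_j}(a_j)=\sigma^\prime_{A_j}(a_j)$ by the previous paragraph, the union $\sigma_A(a)=\bigcup_{j=1}^{k}\sigma_{A_j}(a_j)$ is in fact disjoint. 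Consequently
\[
p_a(\lambda)=\prod_{j=1}^{k}\ \prod_{\alpha\in\sigma_{A_j}(a_j)}(\alpha-\lambda).
\]

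It remains to recognize the inner product as $\dett_{A_j}(a_j-\lambda\mathbf{1}_j)$. By Lemma~\ref{f.3}, each $a_j$ is a maximal finite-rank element of $M_{n_j}(\mathbb C)$, and it is invertible by the first step; as observed after Definition~\ref{d.1}, an invertible maximal finite-rank element of $M_{n_j}(\mathbb C)$ satisfies $0\notin\sigma(a_j)$ and $\#\sigma(a_j)=n_j$, so $a_j$ has exactly $n_j$ distinct nonzero eigenvalues, each of classical algebraic multiplicity one. Hence $\dett_{A_j}(a_j-\lambda\mathbf{1}_j)=\prod_{\alpha\in\sigma_{A_j}(a_j)}(\alpha-\lambda)$, and substituting into the last display yields~\eqref{eqf.1}. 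I expect the only delicate point to be the bookkeeping that simultaneously forces every $m(\alpha,a)$ to equal $1$ and makes the degrees of the two sides agree; both are bought precisely by the hypothesis that $a$ --- and therefore, via Lemma~\ref{f.3}, each $a_j$ --- is an \emph{invertible maximal} finite-rank element. Dropping either hypothesis breaks the identity: without maximality a multiplicity can exceed one, and without invertibility the zero eigenvalue is weighted by the (smaller) spectral multiplicity on the left, exactly as the $3\times 3$ example in Section~1 illustrates.
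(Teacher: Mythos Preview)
Your argument is correct and follows essentially the same route as the paper's proof: both rely on Lemma~\ref{f.2} for disjointness of the block spectra, Lemma~\ref{f.3} to make each $a_j$ maximal finite-rank in $A_j$, invertibility to exclude $0$ from every spectrum, and the fact that spectral multiplicities are all $1$ for maximal finite-rank elements so that the generalized and classical characteristic polynomials match block by block. The paper compresses these steps into a single sentence, whereas you spell them out explicitly.
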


\begin{proof}
By Lemma \ref{f.2}, Lemma \ref{f.3} and the fact that $a \in A^{-1}$, it readily follows that $\beta \in \sigma_{A}(a) \cap \sigma_{A_{j}} \left(a_{j}\right)$ implies that $m \left(\beta, a\right) = m\left(\beta, a_{j}\right) = 1$. Consequently, (\ref{eqf.1}) holds true.
\end{proof}

\begin{theorem}\label{f.5}
\textnormal{(Generalized Cayley-Hamilton Theorem)} Let $a \in \soc(A)$ and let $p_a \left(\lambda\right)$ be its generalized characteristic polynomial. Then $p_a(a) = 0$.
\end{theorem}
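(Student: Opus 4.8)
The plan is to reduce the general socle element to the finite-dimensional, invertible, maximal-rank case handled by Lemma~\ref{f.4}, via two successive approximation/compression arguments. First I would dispose of the case $a=0$ trivially, since then $\sigma_A(a)=\{0\}$ with $m(0,a)=1$, so $p_a(\lambda)=-\lambda$ and $p_a(a)=0$. For a general nonzero $a\in\soc(A)$, I would pick $x\in E_A(a)$ close to $\mathbf 1$ so that $b:=xax$ (or some similar conjugate) is a maximal finite-rank element; more importantly I would use the fact, recalled in the excerpt, that the maximal finite-rank elements are dense in $\soc(A)$ and that $a\mapsto p_a(\lambda_0)$ is continuous on each $\mathcal F_k$. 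Thus it suffices to prove $p_a(a)=0$ for $a$ maximal of rank $n$, because if $a_m\to a$ with each $a_m$ maximal of rank $\le n$ and $p_{a_m}(a_m)=0$, then writing $p_{a_m}(a_m)=\sum_{j} c_j(a_m)\,a_m^{\,j}$ with coefficients $c_j$ that are (by Newton's identities applied to the traces $\tr(a_m^{\,i})$, which vary continuously) continuous functions of $a_m$, one passes to the limit. So the crux is: \emph{let $a\in\soc(A)$ be maximal of rank $n$; show $p_a(a)=0$.}

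For such an $a$, diagonalize as in \cite[Theorem 2.8]{aupmou96}: $a=\lambda_1p_1+\cdots+\lambda_np_n$ with the $\lambda_i$ the distinct nonzero spectral values and the $p_i$ orthogonal minimal (rank-one) Riesz projections. Put $p=p_1+\cdots+p_n$, a finite-rank projection; note $pap=ap=pa=a$, so $a$ lives in the unital subalgebra $pAp$ with identity $p$. Since $p_a(\lambda)$ is built from $\sigma_A(a)$ and the multiplicities $m(\alpha,a)$, and since $0\notin\sigma_A'(a)$ while $0\in\sigma_A(a)$ exactly when $\mathbf 1\ne p$ (equivalently $a$ is not invertible in $A$), I would compare $p_a$ with the ``relative'' polynomial $p^{(pAp)}_a$ computed inside $pAp$. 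By Lemma~\ref{f.0} and Lemma~\ref{f.1} the nonzero spectrum and the rank are unchanged on passing to $pAp$, and $a$ is invertible in $pAp$ (its spectrum there is $\{\lambda_1,\dots,\lambda_n\}$, all nonzero); moreover $a$ remains maximal of rank $n$ in $pAp$. Writing $p_a(\lambda)=(-\lambda)^{\varepsilon}\,q(\lambda)$ where $\varepsilon\in\{0,1\}$ records the factor coming from $0\in\sigma_A(a)$ and $q(\lambda)=\prod_{i=1}^n(\lambda_i-\lambda)$, one has $p_a(a)=(-a)^{\varepsilon}q(a)$; since $\varepsilon\ge 1$ already forces nothing, but $q(a)=p^{(pAp)}_a(a)$ is computed entirely inside $pAp$, it is enough to show $q(a)=0$ in $pAp$ — and if $\varepsilon=1$ then $p_a(a)=-a\,q(a)$ vanishes a fortiori, while if $\varepsilon=0$ then $p_a(a)=q(a)$.

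Finally, to kill $q(a)$ inside $pAp$: here $a$ is an invertible maximal finite-rank element of the unital semisimple Banach algebra $B:=pAp$, with $B$ not yet finite-dimensional in general. To cut down to matrices, observe that $a$ together with the minimal projections $p_i$ generates, via the Peirce-type decomposition $p_iBp_j$, a finite-dimensional semisimple subalgebra; more cleanly, since each $p_i$ is minimal, $p_iBp_i\cong\mathbb C p_i$, and the subalgebra $D$ generated by $p_1,\dots,p_n$ is isomorphic to $\mathbb C^n=M_1(\mathbb C)\oplus\cdots\oplus M_1(\mathbb C)$, with $a\in D$. Apply Lemma~\ref{f.4} in the algebra $D$ (all $n_j=1$): it gives $p^{(D)}_a(\lambda)=\prod_{j=1}^n\dett_{M_1}(\lambda_j-\lambda)=\prod_{j=1}^n(\lambda_j-\lambda)=q(\lambda)$, and in the one-dimensional blocks the classical Cayley-Hamilton theorem \cite[Theorem 3.3.2]{aup91} gives $\dett_{M_1}(a_j-\lambda\mathbf 1_j)\big|_{\lambda\to a_j}=0$ in each block, hence $q(a)=p^{(D)}_a(a)=0$ in $D$, a fortiori in $B$ and in $A$. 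I expect the main obstacle to be the bookkeeping in the second paragraph — verifying carefully that passing to $pAp$ changes $p_a$ only by the explicit factor $(-\lambda)^\varepsilon$ and that maximality is preserved — together with checking that the coefficient functions $c_j(\cdot)$ in the approximation argument of the first paragraph really are continuous on $\mathcal F_n$; both reduce to the continuity of $a\mapsto p_a(\lambda_0)$ and of the traces $a\mapsto\tr(a^i)$ already granted in the excerpt.
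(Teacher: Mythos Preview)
Your argument is essentially correct and takes a genuinely simpler route than the paper. In the maximal case the paper passes to $B=eAe$ (with $e=p_1+\cdots+p_n$), invokes Puhl's result that $B$ is finite-dimensional, applies Wedderburn--Artin to write $B\cong M_{n_1}(\mathbb C)\oplus\cdots\oplus M_{n_k}(\mathbb C)$, and then uses the adjugate formula together with a Cauchy-integral computation to kill $p_{a,\mathbf e}(\psi(a))$ block by block; Lemmas~\ref{f.2}--\ref{f.4} exist precisely to set up that computation. You bypass all of this by observing that the diagonalised $a=\sum\lambda_ip_i$ already lives in the \emph{commutative} subalgebra $D=\mathbb C p_1\oplus\cdots\oplus\mathbb C p_n\cong\mathbb C^n$, where $\prod_i(\lambda_ip-a)=0$ is a one-line computation; this renders the Wedderburn--Artin step and the contour integral unnecessary. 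For the non-maximal case the paper represents $p_m(x_ma)$ and $p_a(a)$ via the holomorphic functional calculus on a fixed circle and appeals to Dominated Convergence, whereas your coefficient-continuity argument (recovering the $c_j$ from finitely many values $p_{a_m}(\lambda_0)$ via interpolation, with degree uniformly $\le n+1$) is more elementary and equally valid. One point to tighten: your sentence ``$q(a)=p_a^{(pAp)}(a)$ is computed entirely inside $pAp$'' is literally false when $p\neq\mathbf 1$, since $\prod_i(\lambda_i\mathbf 1-a)\neq\prod_i(\lambda_ip-a)$ in $A$; what is true (and what the paper also uses) is that when $\varepsilon=1$ the polynomial $p_a$ has zero constant term, so $p_a(a)$ is a polynomial in $a$ without constant term and therefore equals $p_{a,p}(a)$ computed with identity $p$. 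With that clarification your proof goes through and is shorter than the paper's.
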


\begin{proof}
If $a = 0$, the result trivially holds true. So assume that $a \neq 0$. By hypothesis and \cite[Corollary 2.9]{aupmou96}, $a$ has finite-rank, say $\mathrm{rank}\,(a) = n \geq 1$. Suppose first that $a$ is a maximal finite-rank element of $A$ and that $a \notin A^{-1}$. By Theorem \cite[Theorem 2.8]{aupmou96} there exist orthogonal minimal projections $p_{1}, \ldots, p_{n} \in A$ such that $a= \lambda_{1}p_{1} + \cdots \lambda_{n}p_{n}$, where $\lambda_{1}, \ldots, \lambda_{n}$ are the distinct nonzero spectral values of $a$. By the orthogonality and minimality of the $p_{i}$ it readily follows that $e = p_{1} + \cdots +p_{n}$ is a finite-rank projection of $A$. Thus, by Lemma~\ref{f.0}, and \cite[Lemma 4.2]{Puhl}, it follows that $B = eAe$ is a finite-dimensional semisimple closed subalgebra of $A$ with identity $e$, and moreover that
$$\sigma^\prime_{A} (a)=\sigma^\prime_{B} (a).$$
 Observe now that $a\in B^{-1}$ and that, by Lemma~\ref{f.1}, $a$ is a maximal finite-rank element of $B$. In particular, this implies that the multiplicity of each nonzero spectral value of $a$ is $1$, regardless of whether $a$ is viewed as an element of $A$ or $B$ (notice further that $0\in\sigma_A(a)$ has multiplicity one, whereas $0\notin\sigma_B(a)$).  Also, by the Wedderburn-Artin Theorem \cite[Theorem 2.1.2]{aup91} it follows that $B$ is isomorphic as an algebra to
$$C = M_{n_{1}} \left(\mathbb{C}\right) \oplus \cdots \oplus M_{n_{k}} \left(\mathbb{C}\right).$$
Let $\psi$ be the algebra isomorphism from $B$ onto $C$, let $\psi (a) = \left(a_{1}, \ldots, a_{k}\right)$, and let $\psi (e)=\mathbf e$ be the identity of $C$. For each $j \in \left\{1, \ldots, k \right\}$, let $C_{j} = M_{n_{j}} \left(\mathbb{C}\right)$. Using Lemma \ref{f.4} it follows that
\begin{align*}
p_a(\lambda)&=-\lambda\prod_{\alpha \in \sigma_B(a)} \left(\alpha-\lambda\right)^{m\left(\alpha, a\right)}=-\lambda p_{\psi(a)}(\lambda) = -\lambda \prod_{j=1}^{k}\dett_{C_{j}}\left(a_{j}- \lambda \mathbf{1_j}\right)
\end{align*}
Furthermore, since each $\dett_{C_{j}}\left(a_{j}- \lambda \mathbf{1_j}\right)$ defines a polynomial on $\mathbb{C}$, it follows that $\lambda \mapsto \dett_{C_{j}}\left(a_{j}- \lambda \mathbf{1_j}\right)$ is an entire function for each $j \in \left\{1, \ldots, k \right\}$. Let $\Gamma$ be the union of $n+1$ disjoint circles with centers respectively at $\lambda_{1}, \ldots, \lambda_{n}$ and $0$. Now for each $j \in \left\{1, \ldots, k \right\}$ and $\lambda \in \Gamma$ we have
$$\left(a_{j} - \lambda\mathbf{1_j}\right)^{-1} = \frac{1}{\dett_{C_{j}}\left(a_{j}- \lambda \mathbf{1_j}\right)} b_{j} \left(\lambda\right),$$
where $b_{j} \left(\lambda\right)$ is a $n_{j} \times n_{j}$ matrix depending analytically on $\lambda$ since its $\left(k, l\right)$-entry is the $\left(l, k\right)$-cofactor of $a_{j}- \lambda \mathbf{1_j}$, and so it is a polynomial in $\lambda$ of degree less than or equal to $n_{j}-1$. For $j \in \left\{1, \ldots, k \right\}$, let $e_{j}\left(\lambda\right)$ denote the element of $C$ which takes the value $b_{j} \left(\lambda\right)$ at the $j$th coordinate and the value $0$ at all other coordinates. Then, since
$$\left(\lambda \mathbf{e} - \psi (a)\right)^{-1} = - \left( \left(a_{1}-\lambda\mathbf{1_1}\right)^{-1}, \ldots, \left(a_{k}-\lambda\mathbf{1_k}\right)^{-1}\right),$$
we obtain that
$$p_{a,\mathbf e}\left(\psi (a)\right) = \frac{1}{2 \pi i} \sum_{j=1}^{k}\left[\int_{\Gamma} \lambda \left(\prod_{i \neq j}\dett_{C_{i}}\left(a_{i}- \lambda \mathbf{1_i}\right)\right) e_{j} \left(\lambda\right)\,d\lambda\right].$$
But for each $j \in \left\{1, \ldots, k \right\}$, using the standard basis for $C_{j}$ and Cauchy's Theorem, we have
$$ \int_{\Gamma} \lambda \left(\prod_{i \neq j}\dett_{C_{i}}\left(a_{i}- \lambda \mathbf{1_i}\right)\right) e_{j} \left(\lambda\right)\,d\lambda = 0.$$
Thus, $p_{a,\mathbf e}\left(\psi (a)\right) = 0$. Consequently $p_{a,e}(a) = 0$ in $B$, and since the expression $p_a(\lambda)$ does not contain a constant term we also have $p_{a}(a) = 0$ in $A$. If $a$ is a maximal finite-rank element of $A$ and $a \in A^{-1}$, then, in particular, $\soc(A) = A$ implying that $A$ is finite-dimensional. Thus, we may apply the Wedderburn-Artin Theorem directly to $A$, and use a similar argument as above to conclude that $p_a(a)=0$ in $A$. Here $p_a\left(\lambda\right)$ does have a constant term. However, the identity element used in $p_a(a)$ is that of $A$ since we did not pass to a subalgebra of $A$. So the result is true if $a$ is a maximal finite-rank element of $A$. Suppose now that $a$ is not a maximal finite-rank element of $A$. Let $\Gamma_{0} = \partial B\left(0, r\right)$, where $r > 0$ is chosen sufficiently large so that $\sigma_{A} (a) \subseteq B\left(0, r\right)$. Using the upper semicontinuity of the spectrum and \cite[Theorem 2.2]{aupmou96}, we can find a sequence $\left(x_{m}\right) \subseteq E(a)$ such that $x_{m} \rightarrow \mathbf{1}$ as $m \rightarrow \infty$ and $\sigma_{A} \left(x_{m}a\right)$ is contained in the interior of $\Gamma_{0}$ for each integer $m \geq 1$. For each integer $m \geq 1$, denote by $p_{m} \left(\lambda\right) :=p_{x_ma} \left(\lambda\right)$ the characteristic polynomial of $x_{m}a$. Applying the preceding argument to the maximal finite-rank element $x_{m}a$, we conclude that $p_{m} \left(x_{m}a\right) = 0$ for each integer $m \geq 1$. Now, by the continuity of the determinant on $\mathcal{F}_{n}$ it follows that $\left(p_{m}\right)$ converges to $p_a$ pointwise on $\mathbb{C}$. Moreover, by compactness of $\Gamma_{0}$, and continuity of the resolvent on $A^{-1}$, we may infer the existence of two positive real numbers $K_{1}$ and $K_{2}$ such that
$$\left\| \left(\lambda\mathbf{1}-x_{m}a\right)^{-1}\right\| \leq K_{1}$$
and
$$\left\| \left(\lambda\mathbf{1}-x_{m}a\right)^{-1} - \left(\lambda\mathbf{1}-a\right)^{-1} \right\| \leq K_{2}$$
for each $\lambda \in \Gamma_{0}$ and each integer $m \geq 1$. In addition, since $p_a$ is continuous and $\Gamma_{0}$ is compact, it follows that $\left|p_a(\lambda)\right|$ is bounded on $\Gamma_{0}$, say $\left|p_a(\lambda)\right| \leq K_{3}$ for each $\lambda \in \Gamma_{0}$. Also, for each $\lambda \in \Gamma_{0}$ and integer $m \geq 1$ we have
\begin{eqnarray*}
&&\left|p_{m} (\lambda) - p_a(\lambda)\right|  \leq  \left|p_{m} (\lambda)\right| + \left|p_a (\lambda)\right| \leq \left|p_{m} (\lambda)\right| + K_{3} \\
& = & \left|\dett\left(x_{m}a-\lambda \mathbf{1}\right) \right| + K_{3} = \left( \prod_{\alpha \in \sigma_{A} \left(x_{m}a\right)} \left|\alpha - \lambda\right| \right) + K_{3} \\
& \leq & \left( \prod_{\alpha \in \sigma_{A} \left(x_{m}a\right)} \left(\left|\alpha\right| + \left|\lambda\right|\right) \right) + K_{3}  \leq  \left(\rho \left(x_{m}a\right) + r\right)^{\mathrm{rank}\,(a)+1} + K_{3} \\
& \leq & \left(r + r\right)^{n+1} + K_{3} = 2^{n+1}r^{n+1} + K_{3} = K_{4}.
\end{eqnarray*}
Consequently, writing $q_{m}(\lambda) = p_{m} (\lambda) - p_a(\lambda)$ and $F \left(\lambda, m\right) = \left(\lambda\mathbf{1}-x_{m}a\right)^{-1} - \left(\lambda\mathbf{1}-a\right)^{-1}$, we obtain
\begin{eqnarray*}
&&\left\|p_{m} \left(x_{m}a\right) - p_a(a)\right\|   =  \left\|p_{m}\left(x_{m}a\right) - p_a\left(x_{m}a\right) + p_a\left(x_{m}a\right) - p_a(a)\right\| \\
& = & \frac{1}{2 \pi}\left\|\int_{\Gamma_{0}} q_{m}(\lambda)\left(\lambda\mathbf{1}-x_{m}a\right)^{-1}\,d\lambda + \int_{\Gamma_{0}} p_a(\lambda)\cdot F \left(\lambda, m\right)\,d\lambda \right\| \\
& \leq & \frac{1}{2 \pi}\left[\int_{\Gamma_{0}} \left|q_{m}(\lambda)\right|\cdot\left\|\left(\lambda\mathbf{1}-x_{m}a\right)^{-1}\right\|\,d\left|\lambda\right| + \int_{\Gamma_{0}} \left|p_a(\lambda)\right|\cdot\left\|F \left(\lambda, m\right)\right\|\,d\left|\lambda\right| \right] \\
& \leq & \frac{1}{2 \pi}\left[\int_{\Gamma_{0}} \left|q_{m}(\lambda)\right|\cdot K_{1}\,d\left|\lambda\right| + \int_{\Gamma_{0}} K_{3}\cdot\left\|F \left(\lambda, m\right)\right\|\,d\left|\lambda\right| \right].
\end{eqnarray*}
 But, $\left|q_{m} (\lambda)\right| \cdot K_{1} \leq K_{4} \cdot K_{1}$ and $K_{3}\cdot\left\|F \left(\lambda, m\right)\right\| \leq K_{3} \cdot K_{2}$ for each $\lambda \in \Gamma_{0}$ and integer $m \geq 1$, so by the Dominated Convergence Theorem it follows that $p_{m} \left(x_{m}a\right) \rightarrow p_a(a)$ as $m \rightarrow \infty$. However, $p_{m} \left(x_{m}a\right) = 0$ for each integer $m \geq 1$ whence $p_a(a) = 0.$
\end{proof}

\section{Concluding remarks}
 In view of (\ref{determinant}) it is tempting to define, for each $a\in\soc(A)$,
 \begin{equation}\label{jw}
  \dett(a-\lambda\mathbf 1):= \prod_{\alpha \in \sigma (a)} \left(\alpha-\lambda\right)^{m\left(\alpha, a\right)}
 \end{equation}
 where $m(\alpha,a)$ is the \emph{spectral} multiplicity of $a$ at $\alpha$. By definition, we would then have $p_a(\lambda)=\dett(a-\lambda\mathbf 1)$ as in the matrix case. The main reasons which compelled the Authors not to formulate Definition~\ref{d.1} in terms of a determinant are the following:
 \begin{itemize}
 	\item[(i)]{  With the formulation of Definition~\ref{d.1} we were, to some extent, influenced by Sheldon Axler's (somewhat controversial) paper \cite{Axler}. In particular, for $A=M_n(\mathbb C)$, Axler defines the characteristic polynomial, and proves the Cayley-Hamilton Theorem, without the notion of a determinant. Definition~\ref{d.1} is precisely Axler's definition but with the multiplicities replaced by Aupetit and Mouton's ``spectral multiplicities". The use of matrix determinants in the proofs of the results in Section 2 was merely a matter of convenience, since in each instance where a determinant appears, the particular expression equals the characteristic polynomial of some matrix in the sense of Axler. It therefore seemed plausible to obtain a ``determinant free" characteristic polynomial and a subsequent Cayley-Hamilton Theorem. } 
 	\item[(ii)]{ To avoid possible confusion; the example given in Section 1 of the current paper clearly illustrates the point.}
 	\item[(iii)]{ Related to (ii) above, one would expect a formula which is called, and denoted, a determinant to have the basic properties of the classical determinant; with the definition~(\ref{jw}), take $A=\mathbb C^3,$ $a=(1,1,0)\in A$ and observe that 
 	\begin{align*}	
 		 \dett(a-2\mathbf1)&=\dett\left(\left({a}/{2}-\mathbf1\right)2\mathbf 1\right)\\&\not=\dett\left({a}/{2}-\mathbf1\right)\det(2\mathbf1).
 	\end{align*}
 		 So, with (\ref{jw}), the determinant might not be multiplicative. This problem does not surface with Aupetit and Mouton's formulation of the determinant in (\ref{determinant}) because, obviously, $1^n=1$ for all $n\in\mathbb N$. If $a,b\in\soc(A)$, then with (\ref{determinant}) we do have the multiplicative property \cite[Theorem 3.3]{aupmou96} $$\dett((a+\mathbf 1)(b+\mathbf 1))=\dett(a+\mathbf 1)\dett(b+\mathbf 1)$$ as well as a Generalized Sylvester's Theorem \cite[Theorem 2.4]{bbsdet} 
 		 $$\dett(ab+\mathbf 1)= \dett(ba+\mathbf 1).$$      }
\end{itemize}

 \bibliographystyle{amsplain}

\end{document}